\newtheorem{theorem}{Theorem}
\newtheorem{example}[theorem]{Example}
\newtheorem{lemma}[theorem]{Lemma}
\newtheorem{proposition}[theorem]{Proposition}
\newtheorem{remark}[theorem]{Remark}
\newenvironment{proof}[1][Proof]{\noindent\textbf{#1.} }{\ \rule{0.5em}{0.5em}}
\newcommand{\setR}{\mathbb{R}}
\newcommand{\setN}{\mathbb{N}}
\newcommand{\dualspace}[1]{{#1}^{\ast}}
\newcommand{\then}{\Rightarrow}
\newcommand{\Ck}[2][]{\operatorname{C}^{{#1}} \left( {#2} \right)}
\renewcommand{\lim}{\operatornamewithlimits{lim}\limits}
\renewcommand{\liminf}{\operatornamewithlimits{liminf}\limits}
\renewcommand{\inf}{\operatornamewithlimits{inf}\limits}
\renewcommand{\min}{\operatornamewithlimits{min}\limits}
\renewcommand{\max}{\operatornamewithlimits{max}\limits}
\newcommand{\summ}[2][]{\sum\limits^{#1}_{#2} }
\newcommand{\intt}[2][]{\int\limits_{{#2}}^{{#1}}}
\newcommand{\foralll}{\operatornamewithlimits{\forall}\limits}
\newcommand{\existss}{\operatornamewithlimits{\exists}\limits}
\newcommand{\modul}[2][]{\left| {#2} \right|_{#1} }
\newcommand{\norm}[2][]{\left\| {#2} \right\|_{#1}}
\newcommand{\dual}[3][]{\left\langle {#2},{#3} \right\rangle_{#1}}
\newcommand{\set}[1]{\left\lbrace {#1}\right\rbrace}
\newcommand{\seq}[3][\setN]{\left( {#2}_{#3} \right)_{{#3} \in {#1}} }
\newcommand{\boundary}[1]{\partial {#1}}
\newcommand{\signum}{\operatorname{sgn}}
\newcommand{\f}[2][]{{#1} \left({#2}\right)}
\newcommand{\deltaop}[2][]{\Delta {#1}\left({#2}\right)}
\begin{document}

\author{Marek Galewski, Piotr Kowalski}
\title{Three solutions to discrete anisotropic problems with two parameters}
\date{\today}
\maketitle

\begin{abstract}
In this note we derive a~type of a~three critical point theorem which we
further apply to investigate the multiplicity of solutions to discrete
anisotropic problems with two parameters. 
\end{abstract}

\section{Introduction}

The main aim of this note is to develop further a~type of
the three critical point theorem by providing some general version which
would be applicable for various types of nonlinear problems depending on
numerical parameters. 
The main result of this note says that a coercive functional acting on a reflexive strictly convex Banach space under some geometric conditions concerning local behaviour around $0$ has at least three critical points.
The research connected with the existence of at least
three critical points to action functionals, both smooth and nonsmooth,
connected with boundary value problems has received some considerable
attention lately. It begun with the celebrated results of Ricerri \cite
{ricceri.2,ricceri.3} and was further developed in many subsequent
papers, see for example \cite{ricceri.4,ricceri.5}. The three critical
theorem was later generalized, simplified and next extensively applied, see
for example \cite{molica.bisci.bonanno,bonnano.chinne} and references in \cite{ricceri.1}. Recently another type of a~three critical point theorem was developed in 
\cite{cabada.iannizzotto.tersian} and further generalized in \cite{cabada.iannizzotto} to the case of $p-$%
laplacians and in \cite{galewski.wieteska} to the case of anisotropic problems. In
this note, we base ourselves on results in \cite{cabada.iannizzotto,galewski.wieteska} in order to provide yet another type of a~three critical point
theorem, which would hold for problems to which the results mentioned cannot
be applied. Moreover, our main result generalizes main theorems in \cite{cabada.iannizzotto,galewski.wieteska}. 

As a~model problem to which our general multiplicity results could be
applied is the following discrete boundary value problem:
\begin{equation}
\left\{ 
\begin{array}{l}
\begin{array}{l}
-\deltaop{\modul{\deltaop[x]{k-1}}^{p(k-1)-2} \deltaop[x]{k-1}} +\\ 
+\gamma \f[g]{k,x(k)} +\lambda \f[f]{k,x(k)}=0
\end{array}
,\: k\in \left[ 1,T\right] \\ 
x(0)=x(T+1)=0,
\end{array}
\right.  \label{zad}
\end{equation}
where $\gamma ,\lambda >0$ are numerical parameters, $f,g:[1,T]\times 
\setR \rightarrow \setR$ are continuous functions subject to some
assumptions, $[1,T]$ is a~discrete interval $\{1,2,...,T\}$, $\deltaop[x]{k-1}=x(k)-x(k-1)$ is the forward difference operator, $p: \left[ 0,T+1\right] \rightarrow \setR_{+}$, $p^{-}=\min_{k\in \left[ 0,T+1\right]} p(k) >1,$ $p^{+}=\max_{k\in \left[ 0,T+1\right] }p(k) $. Solutions to \eqref{zad} will be investigated in a~space 
\begin{equation*}
X=\set{x:[0,T+1]\rightarrow \setR :x(0)=x(T+1)=0}
\end{equation*}
which considered with a~norm $\norm{x}=\f{ \summ[T+1]{k=1}\modul{\deltaop[x]{k-1}}^2}^\frac{1}{2}$ becomes a~Hilbert space. 

The research concerning the discrete anisotropic problems of type \eqref{zad} have only been started, \cite{kone.ouaro}, \cite{mihailescu.radulescu.tersian}, where known tools from the critical point theory are applied in order to get the existence of solutions. 
In \cite{bereanu.jebelean.serban} the authors undertake the
existence of periodic or Neumann solutions for the discrete $p(k)-$%
Laplacian. The so called ground state solutions are considered in \cite{bereanu.jebelean.serban.2}.
Continuous version of problems like \eqref{zad} are known to be mathematical models of various phenomena arising in the study of elastic mechanics, \cite{zhikov}, electrorheological fluids, \cite{ruzicka}, or image restoration, \cite{chen.levine.rao}. Variational continuous anisotropic problems have been started by Fan and Zhang in \cite{fan.zhang} and later considered by many methods and authors, \cite{harjulehto.hasto.le.nuortio}, for an extensive survey of such boundary value problems. 

For some related papers let us also mention, far from being exhaustive, the following \cite{agarwal.perera.oregan,cai.yu,liu.su,yang.zhang,cheng.zhang,zhang}. These papers employ in the discrete setting the variational techniques already known for continuous problems, of course with necessary modifications. The tools employed cover the Morse theory, the mountain pass methodology and linking arguments.

Paper is organized as follows. Firstly we provide a~variational framework
and assumptions for problem \eqref{zad} in Section 2. Next, in Section 3 we comment
on three critical point theorems which we apply. In Section 4 we give a
general multiplicity result which we apply for problem \eqref{zad} in Section 5.

\section{Variational framework}

In this section we provide a~variational framework for problem \eqref{zad}. We connect solutions to \eqref{zad} with critical points to the following action functional 
\begin{equation*}
E_{\gamma ,\lambda }(x)=\summ[T+1]{k=1} \frac{1}{p(k-1)}\modul{\deltaop[x]{k-1}}^{p(k-1)} +\lambda \summ[T]{k=1} F(k,x(k))+\gamma \summ[T]{k=1}G(k,x(k)),
\end{equation*}%
where $F(k,s)=\intt[s]{0}f(k,t)dt$, $G(k,s)=\intt[s]{0}g(k,t)dt $. With any fixed $\gamma ,\lambda >0$ functional $E_{\gamma ,\lambda }$ is differentiable in the sense of G\^{a}teaux. Its G\^{a}teaux derivative reads
\begin{equation*}
\begin{array}{l}
\dual{E_{\gamma ,\lambda }^{^{\prime }}(x)}{v}=\summ[T+1]{k=1}\modul{\deltaop[x]{k-1}}^{p(k-1)-2} \deltaop[x]{k-1} \deltaop[v]{k-1} +\\
 +\lambda \summ[T]{k=1} f(k,x(k)) v(k) +\gamma \summ[T]{k=1}g(k,x(k))v(k).
\end{array}
\end{equation*}
A critical point to $E_{\gamma ,\lambda }$ is a~point $x\in X$ such that $\dual{E_{\gamma ,\lambda }^{^{\prime }}(x)}{v}=0$ for all 
$v\in X$ and is a~weak solution to \eqref{zad}. Summing by parts we see that
any weak solution to \eqref{zad} is in fact a~strong one. Hence in order to
solve \eqref{zad} we need to find critical points to $E_{\gamma ,\lambda }$
and further investigate their multiplicity. We will need the following
assumptions.

\textit{\begin{enumerate}[label=\textbf{(A.\arabic*)}]
\item $f:[1,T]\times \setR \rightarrow \setR$ is a
continuous function such that
\begin{equation*}
\liminf_{|t|\rightarrow \infty }\frac{F(k,t)}{|t|^{p^{-}}}\geq 0\text{ for
any }k\in [1,T]
\end{equation*}\label{A.1}
\item There exist numbers $m>0$, $s_{2}\geq s_{1}>m$ such that $\f[F]{k,t} >0$ for $(k,t) \in [1,T]\times \f{\left[ -m,m\right] \backslash \set{0}} $ and $\f[F]{k,t} <0$ for $(k,t) \in [1,T]\times \left[ s_{1},s_{2}\right] $. \label{A.2}
\item there exists $M_{1} >0$ such that \label{A.4}
\begin{equation*}
\begin{array}{c}
G(k,t) \leq 0 \: \text{ for all } (k,t) \in [1,T] \times \left[-M_{1},M_{1}\right]\\\\
\liminf_{\modul{t}\to +\infty} \frac{G(k,t)}{\modul{t}}>0 \: \text{ for all } k \in [1,T]
\end{array}
\end{equation*}
\item For all $k \in [1,t]$ and $x \in [-m,m]$ function \label{A.5}
\begin{equation*}
x \rightarrow f(k,x)
\end{equation*} is non-decreasing.
\end{enumerate}}

We note that the assumptions on $f$ are similar to those considered in \cite{galewski.wieteska} but this problem cannot be easily tackled by method from \cite{galewski.wieteska} since we have another term $g$ which also depends on a~numerical parameter. That is why we must provide another three critical point theorem in order to investigate the multiplicity of solutions. 

Now we provide example of nonlinear terms which satisfy assumptions \ref{A.1}-\ref{A.5}.

\begin{example}
Let $T$ be a~positive integer, $T\geq 2.$ Let us consider a~continuous
function $f:\left[ 1,T\right] \times \mathbb{R}\rightarrow \mathbb{R}$ given
by the formula

\begin{equation*}
f(k,x)=\left\{ 
\begin{array}{l l}
\alpha(k) \cdot \frac{1}{2} x&,|x|<2\\
\alpha(k) \cdot  \f{-x+3 \signum (x)}&,2\leq |x|<4\\
\alpha(k) \cdot \f{-\signum (x)}&,4\leq |x|<6\\
\alpha(k) \cdot \f{x-7 \signum (x)}&,6\leq |x|<8\\
\alpha(k) \cdot \signum(x) e^{-|x|+8}&, |x|\geq 8
\end{array}
\right. 
\end{equation*}
where $\alpha [1,T]\rightarrow (0,+\infty)$ is an arbitrary function. Let us consider another function
$g:\left[ 1,T\right] \times \setR \rightarrow \setR$ given
by the formula 
\begin{equation*}
g(k,x) = \beta(k) \cdot \f{0.5- e^{-x^2}}
\end{equation*}
where $\beta [1,T]\rightarrow (0,+\infty)$ is an arbitrary function. Then \ref{A.1} is satisfied since $\lim_{x->\infty} F(k,x) = \intt[\infty]{0} f(k,x) = 0$ for every $k \in [1,T]$. \ref{A.2} and \ref{A.5} are also satisfied with $m=2$ and $s_2=s_1 = 6$. Then for any $x \in [-2,2] \setminus \set{0}$ and $k \in [1,T]$
\begin{equation*}
F(k,x) = \alpha(k) \frac{1}{4} x^2 > 0
\end{equation*}
and 
\begin{equation*}
\begin{array}{c}
F(k,6) = \intt[6]{0} f(k,x) = \\\\
=\intt[2]{0} \alpha(k) \cdot \frac{1}{2} x + \intt[4]{2} \alpha(k) \cdot  \f{-x+3 \signum (x)} + \intt[6]{4} \alpha(k) \cdot \f{-\signum (x)} =\\\\
=  \alpha(k) + 0 - 2\alpha(k) < 0
\end{array}
\end{equation*}
Since function $g$ is negative in neighbourhood of $0$ for every $k\in [1,T]$ thus $G$ defined as $G(k,x) := \intt[x]{0} g(k,s) ds$ is nonpositive in this neighbourhood. On the other hand, for sufficiently large $\modul{t}$:
\begin{equation*}
G(k,t)\geq \beta(k) \frac{1}{4}\cdot \modul{t}.
\end{equation*}
Thus
\begin{equation*}
\liminf_{\modul{t} \to +\infty} \frac{G(k,t)}{\modul{t}} \geq \beta(k) \frac{1}{4	} > 0
\end{equation*}
which implies that \ref{A.4} holds.
\end{example}

\section{Remarks on a~three critical point theorems}


In this section we comment on some recently obtained
results pertaining to the existence of three critical points to action functionals.

\begin{theorem}\label{theo_from_Cabada}
Let $(X,\norm{\cdot} )$ be a~uniformly convex Banach space with strictly convex dual space, $J\in \Ck[1]{X}$ be a~functional with compact derivative, $x_{0},$ $x_{1}\in X$, $p,r \in \setR$ be such that $p>1$ and $r>0$. Let the following conditions be satisfied:

\begin{enumerate}[label=\textbf{(B.\arabic*)}]
\item $\liminf_{\norm{x}\to+\infty}\frac{J(x)}{\norm{x}^{p}}\geq 0$
\item $\inf_{x \in X} J(x)< \inf_{\norm{x-x_0}\leq r	} J(x) $
\item $\norm{x_1-x_0}<r$ and $J(x_1) < \inf_{\norm{x-x_0}=r} J(x)$.
\end{enumerate}
Then there exists a~nonempty open set $A\subseteq (0,+\infty )$ such that for all $\lambda \in A$ the functional $x\rightarrow \frac{\norm{x-x_{0}}^{p}}{p}+\lambda J(x)$ has at least three critical points in $X$.
\end{theorem}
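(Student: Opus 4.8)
The plan is to fix $\lambda>0$, abbreviate $\psi(x)=\frac{\norm{x-x_0}^p}{p}$ and $\Phi_\lambda=\psi+\lambda J$, and to exhibit three critical points of $\Phi_\lambda$ as a local minimizer inside the open ball $B=\set{x:\norm{x-x_0}<r}$, a global minimizer lying outside $\overline{B}$, and a mountain--pass point separating the two. Two preliminary facts would be established for every $\lambda>0$. For coercivity, condition (B.1) gives, for the chosen $\lambda$, a radius beyond which $J(x)\ge-\frac{1}{2p\lambda}\norm{x}^p$, so that $\Phi_\lambda(x)\ge\frac{\norm{x-x_0}^p}{p}-\frac{1}{2p}\norm{x}^p$; since $\norm{x-x_0}\ge\norm{x}-\norm{x_0}$ the first term dominates and $\Phi_\lambda(x)\to+\infty$ as $\norm{x}\to+\infty$. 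For the (PS) condition, coercivity makes any Palais--Smale sequence bounded, reflexivity of $X$ extracts $x_n\rightharpoonup x$, compactness of $J'$ gives $J'(x_n)\to J'(x)$ strongly, and hence $\dual{\psi'(x_n)}{x_n-x}\to0$. The $(S_+)$ property of the duality map $\psi'$, which holds precisely because $X$ is uniformly convex with strictly convex dual $\dualspace{X}$, then upgrades weak to strong convergence, so $x_n\to x$.

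Next I would produce the two minimizers. Minimizing $\Phi_\lambda$ over the weakly compact convex set $\overline{B}$, using weak lower semicontinuity of $\psi$ and the sequential weak continuity of $J$ (again from compactness of $J'$), yields a minimizer $u_1\in\overline{B}$. The role of (B.3) is that, for every $\lambda>0$,
\[
\Phi_\lambda(x_1)=\frac{\norm{x_1-x_0}^p}{p}+\lambda J(x_1)<\frac{r^p}{p}+\lambda\inf_{\norm{x-x_0}=r}J(x)=\inf_{\norm{x-x_0}=r}\Phi_\lambda ,
\]
because $\norm{x_1-x_0}<r$ and $J(x_1)<\inf_{\norm{x-x_0}=r}J$; hence the minimum over $\overline{B}$ is not attained on the sphere, $u_1\in B$, and $u_1$ is a genuine local minimum of $\Phi_\lambda$ on $X$. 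For the second minimizer, coercivity and weak lower semicontinuity give a global minimizer $u_2$ of $\Phi_\lambda$ on $X$; here (B.2) enters. Choosing $z$ with $J(z)<\inf_{\norm{x-x_0}\le r}J$ (necessarily $\norm{z-x_0}>r$) and setting $\delta:=\inf_{\norm{x-x_0}\le r}J-J(z)>0$, one has $\Phi_\lambda(z)<\inf_{\overline{B}}\Phi_\lambda$ whenever $\frac{\norm{z-x_0}^p}{p}<\lambda\delta$, that is for all $\lambda>\lambda_*:=\frac{\norm{z-x_0}^p}{p\delta}$. For such $\lambda$ the global minimum lies strictly below $\Phi_\lambda(u_1)$, which forces $u_2\notin\overline{B}$ and $u_1\ne u_2$.

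Finally I would invoke the mountain pass theorem with base points $u_1\in B$ and $u_2\notin\overline{B}$. Every continuous path joining them meets the sphere $\norm{x-x_0}=r$, on which $\Phi_\lambda\ge\beta:=\inf_{\norm{x-x_0}=r}\Phi_\lambda$; since $\Phi_\lambda(u_2)<\Phi_\lambda(u_1)<\beta$, the minimax value $c=\inf_{\gamma}\max_{t\in[0,1]}\Phi_\lambda(\gamma(t))$ satisfies $c\ge\beta$ and exceeds both endpoint values, so the geometry is nondegenerate. With coercivity and (PS) already in hand, the mountain pass theorem supplies a critical point $u_3$ with $\Phi_\lambda(u_3)=c\ge\beta>\Phi_\lambda(u_1)>\Phi_\lambda(u_2)$, distinct from $u_1$ and $u_2$. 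Consequently, for every $\lambda$ in the nonempty open set $A=(\lambda_*,+\infty)\subseteq(0,+\infty)$ the functional $\Phi_\lambda$ has at least three critical points, as claimed.

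The step I expect to be the main obstacle is the compactness feeding both the (PS) condition and the weak continuity of $J$: these rest on the compactness of $J'$ together with the fact that $\psi'$ is a strictly monotone homeomorphism enjoying the $(S_+)$ property, which is exactly what uniform convexity of $X$ and strict convexity of $\dualspace{X}$ deliver. A secondary delicate point is ensuring that $u_1$ is an interior rather than a boundary minimizer — settled above via (B.3) — and that the three critical values are genuinely distinct, which follows from the strict chain $\Phi_\lambda(u_2)<\Phi_\lambda(u_1)<\beta\le c$.
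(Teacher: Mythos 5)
Your proof is essentially correct, but note first that the paper offers no proof of Theorem \ref{theo_from_Cabada}: it is imported from \cite{cabada.iannizzotto}, and the only internal comment is the remark that it is the special case $\mu(x)=\norm{x-x_{0}}^{p}$ of Theorem \ref{Galewski.Wieteska}, itself quoted from \cite{galewski.wieteska}. The route the paper actually follows for results of this kind is Lemma \ref{Key-lemma}, i.e.\ Ricceri's minimax inequality combined with Bonanno's theorem: one checks the coercivity condition \ref{D.3}, then exhibits $x_{1},x_{2}$ and a level $\sigma$ realizing the strict minimax inequality \ref{D.2} (exactly as in the proof of Theorem \ref{3CP.Kowalski.Galewski}), and the three critical points are delivered by that lemma as a black box. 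Your argument is a genuinely different, more hands-on construction in the spirit of Pucci--Serrin: a constrained minimizer on the closed ball which \textbf{(B.3)} pushes off the sphere and hence into the interior, a global minimizer which \textbf{(B.2)} pushes outside the ball once $\lambda$ exceeds the explicit threshold $\lambda_{*}=\norm{z-x_{0}}^{p}/(p\delta)$ built from a witness $z$ of \textbf{(B.2)}, and a mountain-pass point over the separating sphere, with the Palais--Smale condition obtained from coercivity, the compactness of $J^{\prime}$ and the $(S_{+})$ property of the duality map. What you gain is an explicit half-line $A=(\lambda_{*},+\infty)$, which the minimax route does not provide; what you pay is a heavier analytic load, and two steps deserve more care than you give them. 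First, strict convexity of $\dualspace{X}$ only guarantees G\^{a}teaux smoothness of the norm, so to apply the mountain-pass (or Pucci--Serrin) theorem one must either invoke a version for G\^{a}teaux-differentiable functionals or justify that $\norm{\cdot-x_{0}}^{p}$ is genuinely of class $\ck[1]$ in the spaces at hand. Second, identifying the strong limit of $J^{\prime}(x_{n})$ with $J^{\prime}(x)$ along a weakly convergent Palais--Smale sequence uses that a compact gradient on a reflexive space is completely continuous; this is standard but should be cited rather than asserted. Neither point is a genuine gap, so the proposal stands as a valid alternative proof.
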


The above theorem initiated some later research as concerning its applicability to anisotropic problems, see \cite{galewski.wieteska}, where the term $ \norm{x}^{p}$ is replaced by some convex coercive functional. Namely, the result from \cite{galewski.wieteska} reads:
\begin{theorem}\label{Galewski.Wieteska}
Let $(X,\norm{\cdot})$ be a~uniformly convex Banach space with strictly convex dual space, $J\in \Ck[1]{X,\setR}$ be a~functional with compact derivative, $\mu \in \Ck[1]{X,\setR_+}$ be a~convex coercive functional such that its derivative is an operator $\mu^{\prime}:X\rightarrow \dualspace{X}$ admitting a~continuous inverse, let $\widetilde{x}\in X$ and $r>0$ be fixed. Assume that the
following conditions are satisfied:
\begin{enumerate}[label=\textbf{(C.\arabic*)}]
\item $\liminf_{\norm{x}\to\infty} \frac{J(x)}{\mu(x)} \geq 0$ \label{C.1}
\item $\inf_{x \in X} J(x)< \inf_{\mu{x}\leq r} J(x) $ \label{C.2}
\item $\f[\mu]{\widetilde{x}}<r$ and $J(\widetilde{x}) < \inf_{\mu(x)=r} J(x)$. \label{C.3}
\end{enumerate}
Then there exists a~nonempty open set $A\subseteq (0,+\infty )$ such that for all $\lambda \in A$ the functional $\mu +\lambda J$ has at least three critical points in $X$.
\end{theorem}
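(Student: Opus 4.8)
The plan is to mimic the strategy behind Theorem \ref{theo_from_Cabada}, replacing the special functional $\norm{x-x_0}^p/p$ by the abstract $\mu$ and using its stated properties in place of the geometric features of a power of the norm. Writing $\Phi_\lambda=\mu+\lambda J$, the goal is to produce, for a suitable range of $\lambda$, two distinct local minima of $\Phi_\lambda$ and then invoke a mountain-pass argument to extract a third critical point.

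First I would record the analytic properties of $\Phi_\lambda$. Since $\mu$ is convex and continuous it is weakly lower semicontinuous, and since $J'$ is compact, $J$ is sequentially weakly continuous; hence $\Phi_\lambda$ is weakly lower semicontinuous. Coercivity is where \ref{C.1} enters: given any $\lambda>0$, choosing $\varepsilon=1/(2\lambda)$ in \ref{C.1} yields $R$ with $J(x)\geq-\varepsilon\mu(x)$ for $\norm{x}\geq R$, whence $\Phi_\lambda(x)\geq\tfrac12\mu(x)$ for large $\norm{x}$; as $\mu$ is coercive so is $\Phi_\lambda$, for \emph{every} $\lambda>0$. Because $X$ is reflexive this already gives a global minimizer of $\Phi_\lambda$. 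I would also verify the Palais--Smale condition from the structural hypotheses: if $\Phi_\lambda'(x_n)\to0$ with $\Phi_\lambda(x_n)$ bounded, coercivity bounds $(x_n)$, compactness of $J'$ makes $\lambda J'(x_n)$ convergent along a subsequence, so $\mu'(x_n)$ converges, and the continuous inverse of $\mu'$ forces strong convergence of $(x_n)$.

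The geometric heart of the argument uses \ref{C.3} to manufacture a second, interior local minimum. Minimizing $\Phi_\lambda$ over the weakly compact convex set $\set{x:\mu(x)\leq r}$ produces a constrained minimizer $y_\lambda$; I claim it lies in the open interior $\set{x:\mu(x)<r}$. Indeed, on $\set{x:\mu(x)=r}$ one has $\Phi_\lambda\geq r+\lambda\inf_{\mu(x)=r}J$, while $\Phi_\lambda(\widetilde{x})=\mu(\widetilde{x})+\lambda J(\widetilde{x})$, and \ref{C.3} gives $\mu(\widetilde{x})-r<0<\lambda\bigl(\inf_{\mu(x)=r}J-J(\widetilde{x})\bigr)$, so $\Phi_\lambda(\widetilde{x})$ is strictly below the infimum of $\Phi_\lambda$ on the sphere, for every $\lambda>0$. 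Hence $y_\lambda$ cannot sit on $\set{x:\mu(x)=r}$ and is an unconstrained local minimizer. To separate $y_\lambda$ from the global minimizer I would use \ref{C.2}: fix $\widehat{x}$ with $J(\widehat{x})<d:=\inf_{\mu(x)\leq r}J$ (such $\widehat{x}$ exists and lies outside the ball), and observe $\inf_{\mu(x)\leq r}\Phi_\lambda\geq\lambda d$ while $\Phi_\lambda(\widehat{x})<\lambda d$ as soon as $\lambda>\lambda_1:=\mu(\widehat{x})/(d-J(\widehat{x}))$. Thus for $\lambda\in A:=(\lambda_1,+\infty)$ the global minimizer of $\Phi_\lambda$ lies outside the ball and is distinct from the interior minimizer $y_\lambda$.

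Finally, for $\lambda\in A$ the functional $\Phi_\lambda$ is of class $\Ck[1]{X,\setR}$, satisfies Palais--Smale, and has two distinct local minima, so the mountain-pass/three-critical-point lemma underlying Theorem \ref{theo_from_Cabada} yields a third critical point at the mountain-pass level. I expect the main obstacle to be precisely this last step: guaranteeing that the mountain-pass critical point is genuinely distinct from the two minima rather than coinciding with one of them, which in the degenerate case requires the refined deformation argument (Ghoussoub--Preiss / Pucci--Serrin) rather than the plain mountain pass theorem. A secondary point needing care is deducing weak continuity of $J$ from compactness of $J'$, and confirming that $A=(\lambda_1,+\infty)$ is nonempty and open, which is immediate once $\lambda_1$ is seen to be finite.
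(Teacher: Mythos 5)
Your proposal is correct in outline, but it follows a genuinely different route from the one this paper (and the cited source) uses. The paper never reproves Theorem \ref{Galewski.Wieteska} directly; it is quoted from the literature, and the proof strategy the authors actually employ --- visible in their proof of the generalization, Theorem \ref{3CP.Kowalski.Galewski} --- is to feed the data into Lemma \ref{Key-lemma} (Ricceri--Bonanno): one takes $\Phi=\mu$, $\sigma=r$, uses \ref{C.3} to show that the minimizer $x_1$ of $J$ (not of $\mu+\lambda J$) over the weakly compact set $\set{x:\mu(x)\leq r}$ satisfies $\mu(x_1)<r$, uses \ref{C.2} to produce $x_2$ with $\mu(x_2)>r$ and $J(x_2)<J(x_1)$, checks the algebraic inequality \ref{D.2} from $J(x_2)<J(x_1)$, and gets coercivity \ref{D.3} from \ref{C.1} exactly as you do; the nonempty open set $A$ then comes out of the lemma as a black box. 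You instead work with the full perturbed functional $\mu+\lambda J$: you build an interior constrained minimizer $y_\lambda$ and an exterior global minimizer for $\lambda$ in an explicit half-line $(\lambda_1,+\infty)$, verify Palais--Smale from compactness of $J'$ and the continuous inverse of $\mu'$, and invoke the Pucci--Serrin three-critical-point theorem. Both arguments are sound; yours has the advantage of producing an explicit unbounded interval $A$ and of not actually needing uniform convexity of $X$ or strict convexity of $\dualspace{X}$ (reflexivity suffices), at the cost of importing the refined degenerate mountain-pass machinery --- a dependency you correctly flag, and which is indispensable, since the minimax level may coincide with the value at one of the minima. The paper's route avoids that machinery entirely by pushing all the minimax work into Lemma \ref{Key-lemma}, where only $J$ (a functional with compact derivative, hence weakly continuous) is minimized over sublevel sets, which is why no Palais--Smale verification appears anywhere in the paper.
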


\begin{remark}
Note that when $\mu(x) =\norm{x}^{p}$ then
Theorem \ref{theo_from_Cabada} follows from Theorem \ref{Galewski.Wieteska}.
\end{remark}

Some further question can be asked when examining assumptions and proof of Theorem \ref{Galewski.Wieteska}. Namely whether this is possible to weaken assumptions \ref{C.1}-\ref{C.3}. We try to answer these questions
in this note providing some related multiplicity result. In our
proof we will base on Theorem \ref{Galewski.Wieteska} and also on the following lemma, which can be easily
derived from \cite[Proposition 2.2]{ricceri.1} and \cite[Theorem 1]{bonnano}

\begin{lemma}\label{Key-lemma} 
Let $(X,\norm{\cdot})$ be a~reflexive Banach
space, $I\subseteq \setR_+$ be an interval, $\Phi \in \Ck[1]{X}$ be a~sequentially weakly l.s.c. functional whose derivative admits a~continuous inverse, $J\in \Ck[1]{X}$ be a~functional with compact derivative. Moreover, assume that there exist $x_{1},x_{2}\in X$ and $\sigma \in \setR$ such that:
\begin{enumerate}[label=\textbf{(D.\arabic*)}]
\item $\Phi (x_{1})<\sigma <\Phi (x_{2})$ \label{D.1}
\item $\inf_{\Phi (x)\leq \sigma} J(x)>\frac{\left(
\Phi \left( x_{2}\right) -\sigma \right) J(x_{1})+\left( \sigma -\Phi
(x_{1})\right) J(x_{2})}{\Phi (x_{2})-\Phi (x_{1})}$ \label{D.2}
\item $\lim_{\norm{x}\to \infty} \left[ \Phi (x)+\lambda J(x)\right] =+\infty $ for all $\lambda \in I$. \label{D.3}
\end{enumerate}
Then there exists a~nonempty open set $A\subseteq I$ such that for all $\lambda \in A$ the functional $\Phi +\lambda J$ has at least three critical points in X.
\end{lemma}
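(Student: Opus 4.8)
The plan is to obtain the lemma as a direct composition of Ricceri's minimax principle \cite[Proposition 2.2]{ricceri.1} with Bonanno's separation theorem \cite[Theorem 1]{bonnano}, so that the argument reduces to verifying that the hypotheses of those two results are supplied by \ref{D.1}--\ref{D.3}. Recall that \cite[Proposition 2.2]{ricceri.1} asserts the following: if $X$ is reflexive, $\Phi$ is sequentially weakly l.s.c., $C^1$, with derivative admitting a continuous inverse, $J$ is $C^1$ with compact derivative, the functional $\Phi+\lambda J$ is coercive for every $\lambda\in I$, and there exists $\rho\in\setR$ with
\[
\sup_{\lambda\in I}\inf_{x\in X}\left(\Phi(x)+\lambda(J(x)+\rho)\right) < \inf_{x\in X}\sup_{\lambda\in I}\left(\Phi(x)+\lambda(J(x)+\rho)\right),
\]
then there is a nonempty open $A\subseteq I$ on which $\Phi+\lambda J$ has at least three critical points. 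All the regularity requirements here are exactly the standing hypotheses of the lemma, and the coercivity requirement is precisely \ref{D.3}. Hence the whole proof comes down to producing a single real number $\rho$ for which the displayed strict minimax inequality holds.

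First I would invoke \cite[Theorem 1]{bonnano}, whose hypotheses are, essentially verbatim, conditions \ref{D.1} and \ref{D.2}: the existence of $x_1,x_2$ and $\sigma$ with $\Phi(x_1)<\sigma<\Phi(x_2)$ together with the strict inequality between $\inf_{\Phi(x)\le\sigma}J(x)$ and the affine interpolation
\[
\ell:=\frac{(\Phi(x_2)-\sigma)J(x_1)+(\sigma-\Phi(x_1))J(x_2)}{\Phi(x_2)-\Phi(x_1)}.
\]
Geometrically, $\ell$ is the height at abscissa $\sigma$ of the line joining the planar points $(\Phi(x_1),J(x_1))$ and $(\Phi(x_2),J(x_2))$, and \ref{D.2} states that this height lies strictly below the level $\inf_{\Phi(x)\le\sigma}J(x)$. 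This gap is exactly what permits a separation, by an affine function $(t,s)\mapsto t+\lambda(s+\rho)$ of the pair $(t,s)=(\Phi,J)$, so that Bonanno's theorem returns a $\rho$ realizing the strict minimax inequality above. Feeding this $\rho$ into Ricceri's proposition then produces the three critical points and completes the argument.

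The step I expect to be the main obstacle is the bookkeeping around the parameter interval. Bonanno's theorem naturally produces the minimax gap over the half-line $\setR_+$, whereas \cite[Proposition 2.2]{ricceri.1} is to be applied on the prescribed interval $I\subseteq\setR_+$; I would need to check that the $\rho$ obtained still separates the two sides once the supremum and infimum over $\lambda$ are restricted to $I$, using that shrinking the admissible $\lambda$-range lowers the right-hand side no faster than the left. A secondary point is to confirm that the extremal values in the minimax inequality are finite and that the relevant infima over $x$ are attained; here reflexivity of $X$, the sequential weak lower semicontinuity of $\Phi$, and the coercivity \ref{D.3} guarantee that the sublevel set $\{\Phi\le\sigma\}$ behaves well and that no degeneracy spoils the strict inequality. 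Once these verifications are in place, the conclusion is immediate from \cite[Proposition 2.2]{ricceri.1}.
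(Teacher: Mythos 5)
Your proposal follows exactly the route the paper intends: the paper gives no written proof of Lemma \ref{Key-lemma}, stating only that it ``can be easily derived from \cite[Proposition 2.2]{ricceri.1} and \cite[Theorem 1]{bonnano}'', and your argument is precisely that derivation --- Bonanno's result converts \ref{D.1}--\ref{D.2} into the strict minimax inequality, and Ricceri's proposition, whose remaining hypotheses are the standing regularity assumptions plus the coercivity \ref{D.3}, then yields the open set $A$ and the three critical points. The sketch is correct and matches the paper's approach.
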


We will provide our main results in terms of a~kind of comparison theorems.
In this section we provide the following simple observation:

\begin{theorem}
Let $(X,\norm{\cdot})$ be a~uniformly convex Banach space with strictly convex dual space, $J\in \Ck[1]{X,\setR}$ be a~functional with compact derivative. $\mu _{1}\in \Ck[1]{X,\setR}$ and $\mu_{2}\in \Ck[1]{X,\setR_+}$ be a~convex coercive functional such that its derivative is an operator $\mu_{2}^{\prime }:X\rightarrow \dualspace{X}$ admitting a~continuous inverse, let $y\in X$ and $r>0$ be fixed. Assume the following conditions are
satisfied:
\begin{enumerate}[label=\textbf{(E.\arabic*)}]
\item $\liminf_{\norm{x}\to\infty} \frac{J(x)}{\mu_2(x)} \geq 0$ \label{E.1}
\item $\inf_{x \in X} J(x)< \inf_{\f[\mu_1]{x}\leq r} J(x) $ \label{E.2}
\item $\f[\mu_2]{\widetilde{x}}<r$ and $J(\widetilde{x}) < \inf_{\f[\mu_2]{x}=r} J(x)$. \label{E.3}
\item For all $x \in X$ if $\mu_2(x) \leq r$ then $\mu_1(x) \leq \mu_2(x)$.
\end{enumerate}
Then there exists a~non empty open set $A\subset (0,+\infty )$ such that for all $\lambda \in A$ the functional $x\rightarrow \mu_{2}(x)+\lambda J(x)$ has at least three critical points in $X$.
\end{theorem}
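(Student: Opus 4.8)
The plan is to reduce the statement to Theorem \ref{Galewski.Wieteska}, applied with the convex coercive functional taken to be $\mu_2$. All the structural hypotheses of that theorem are already granted here: $X$ is uniformly convex with strictly convex dual, $J \in \Ck[1]{X,\setR}$ has compact derivative, and $\mu_2 \in \Ck[1]{X,\setR_+}$ is convex, coercive, with $\mu_2^{\prime}$ admitting a continuous inverse. Comparing the numbered conditions, I observe that (E.1) is literally condition (C.1) of Theorem \ref{Galewski.Wieteska} written for $\mu_2$, and (E.3) is literally its condition (C.3) for $\mu_2$. Hence the only hypothesis of Theorem \ref{Galewski.Wieteska} that is not immediately at hand is (C.2), namely $\inf_{x \in X} J(x) < \inf_{\f[\mu_2]{x} \leq r} J(x)$. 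The entire work of the theorem is to produce this from the assumption (E.2), which instead controls the sublevel set of $\mu_1$; note that $\mu_1$ enters only through this sublevel set, so no further regularity of $\mu_1$ is needed.

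The mechanism is condition (E.4), and it is the only genuinely new ingredient. First I would rewrite (E.4) as an inclusion of sublevel sets: for any $x$ with $\f[\mu_2]{x} \leq r$, condition (E.4) gives $\f[\mu_1]{x} \leq \f[\mu_2]{x} \leq r$, so that
\begin{equation*}
\set{x \in X : \f[\mu_2]{x} \leq r} \subseteq \set{x \in X : \f[\mu_1]{x} \leq r}.
\end{equation*}
Passing to the infimum of $J$, and using that the infimum over a smaller set is at least the infimum over a larger one, I get
\begin{equation*}
\inf_{\f[\mu_1]{x} \leq r} J(x) \leq \inf_{\f[\mu_2]{x} \leq r} J(x).
\end{equation*}
Chaining this with (E.2) then produces the desired strict inequality
\begin{equation*}
\inf_{x \in X} J(x) < \inf_{\f[\mu_1]{x} \leq r} J(x) \leq \inf_{\f[\mu_2]{x} \leq r} J(x),
\end{equation*}
which is exactly (C.2) for $\mu_2$.

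Having verified (C.1), (C.2) and (C.3) for $\mu_2$, I would then invoke Theorem \ref{Galewski.Wieteska} to obtain a nonempty open set $A \subseteq (0,+\infty)$ such that $\mu_2 + \lambda J$ has at least three critical points for every $\lambda \in A$; this functional is precisely $x \mapsto \mu_2(x) + \lambda J(x)$, which finishes the argument. I do not anticipate a substantial obstacle, since the proof is a short reduction containing no analysis beyond what Theorem \ref{Galewski.Wieteska} already supplies. The single point requiring care is the orientation of the set inclusion coming from (E.4): the region $\set{\mu_2 \leq r}$ sits \emph{inside} $\set{\mu_1 \leq r}$, and correspondingly the $\mu_1$-infimum bounds the $\mu_2$-infimum from below, which is the direction one needs in order to upgrade (E.2) to (C.2).
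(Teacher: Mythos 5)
Your proposal is correct and follows exactly the paper's own argument: observe that (E.4) yields the inclusion $\set{\mu_2 \leq r} \subseteq \set{\mu_1 \leq r}$, hence $\inf_{\mu_1(x)\leq r} J(x) \leq \inf_{\mu_2(x)\leq r} J(x)$, which upgrades (E.2) to condition (C.2), and then apply Theorem \ref{Galewski.Wieteska} with $\mu := \mu_2$. Your write-up is in fact more explicit than the paper's two-line proof about which hypotheses of Theorem \ref{Galewski.Wieteska} are being checked.
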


\begin{proof}
If $z\in \set{ x:\mu _{2}(x) \leq r} $ then $\mu_{1}(z)\leq r$. Thus 
\begin{equation*}
\inf_{\mu_{1}(x)\leq r}J(x) \leq \inf_{\mu_{2}\leq r} J(x)
\end{equation*}%
We apply Theorem \ref{Galewski.Wieteska} with $\mu :=\mu _{2}$.
\end{proof}

\section{A general multiplicity result}

In this section we provide our main result.

\begin{theorem}\label{3CP.Kowalski.Galewski}[Main Theorem]
Let $(X,\norm{\cdot})$ be a~uniformly convex Banach space with strictly convex dual space, $J\in \Ck[1]{X,\setR}$ be a~functional with compact derivative. Assume that $\mu_{1}\in \Ck[1]{X,\setR}$ is sequentially w.l.s.c and coercive. Let $\mu_{2}\in \Ck[1]{X,\setR_+}$ be a~convex coercive functional. Assume that derivative of $\mu_{1}$ is an operator $\mu_{1}^{\prime }:X\rightarrow \dualspace{X}$ admitting a~continuous inverse. Let $y\in X$ and $r>0$ be fixed. Assume the following conditions are satisfied:
\begin{enumerate}[label=\textbf{(F.\arabic*)}]
\item $\liminf_{\norm{x}\rightarrow \infty }\frac{J(x)}{\mu_{2}\left( x\right) }\geq 0$ \label{F.1}
\item $\inf_{x\in X}J(x)<\inf_{\mu _{1}\left( x\right) \leq r}J(x)$ \label{F.2}
\item $\mu_{2}\left( y\right) <r$ and $J(y)<\inf_{\mu_{2}\left( x\right) =r}J(x)$ \label{F.3}
\item $\foralll_{x\in X}\mu_{2}\left( x\right) \leq r \then \mu _{1}\left( x\right) \leq \mu _{2}\left( x\right) $ and $\mu_{1}\left( x\right) \geq \mu _{2}\left( x\right) $ for $\norm{x} \geq M$, where $M>0$ is some constant. \label{F.4}
\item $J$ is convex on the convex hull of $B:=\set{x\in X:\mu _{1}(x)\leq r}$ \label{F.5}
\end{enumerate}
Then there exists a~non empty open set $A\subset (0,+\infty )$ such that for all $\lambda \in A$ the functional $x \to \mu _{1}\left( x\right) +\lambda J(x)$ has at least three critical points.
\end{theorem}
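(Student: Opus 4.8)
The plan is to apply Lemma~\ref{Key-lemma} directly to the pair $(\mu_1,J)$, that is, with $\Phi:=\mu_1$ and threshold $\sigma:=r$. The structural hypotheses of the lemma are already in hand: $\mu_1$ is $C^1$, sequentially weakly l.s.c.\ and has $\mu_1'$ with continuous inverse, $J$ has compact derivative, and $X$ is reflexive (being uniformly convex). So the entire task reduces to exhibiting points $x_1,x_2$ and an interval $I\subseteq\setR_+$ realizing conditions (D.1)--(D.3); I would take $\sigma=r$ throughout and then read off the open set $A$ from the lemma.

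First I would dispose of the coercivity condition (D.3). Fix any $\lambda_0>0$ and set $I:=(0,\lambda_0)$. Given $\lambda\in I$, apply (F.1) with $\varepsilon=\tfrac{1}{2\lambda_0}$ to obtain $R$ with $J(x)\ge-\varepsilon\,\mu_2(x)$ whenever $\norm{x}\ge R$; combining this with the bound $\mu_2(x)\le\mu_1(x)$ for $\norm{x}\ge M$ from (F.4), one gets $\mu_1(x)+\lambda J(x)\ge(1-\lambda\varepsilon)\mu_1(x)\ge\tfrac12\mu_1(x)$ for large $\norm{x}$, which tends to $+\infty$ by coercivity of $\mu_1$. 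Hence (D.3) holds on all of $I$.

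The heart of the proof is producing $x_1$ with $\mu_1(x_1)<r$ and $J(x_1)=\inf_{\mu_1(x)\le r}J(x)$. Writing $B:=\set{x:\mu_1(x)\le r}$ and $c:=\inf_{x\in B}J(x)$, I would argue as follows. The set $\set{x:\mu_2(x)\le r}$ is closed, bounded and convex (as $\mu_2$ is convex, continuous and coercive), hence weakly compact; since $J$ is convex by (F.5) and continuous on the convex hull $\operatorname{conv}B\supseteq\set{x:\mu_2(x)\le r}$, it is weakly l.s.c.\ there and attains a minimum at some $x^{*}$. By (F.3), $\inf_{\mu_2(x)\le r}J(x)\le J(y)<\inf_{\mu_2(x)=r}J(x)$, so this minimizer cannot lie on the sphere $\set{x:\mu_2(x)=r}$; therefore $\mu_2(x^{*})<r$, and (F.4) yields $\mu_1(x^{*})\le\mu_2(x^{*})<r$. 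As $x^{*}$ is then an interior (hence local, unconstrained) minimizer of the $C^1$ map $J$, we get $J'(x^{*})=0$, and convexity of $J$ on $\operatorname{conv}B$ upgrades this via the one-dimensional first-order inequality along segments to $J(x^{*})\le J(b)$ for every $b\in B$. Thus $J(x^{*})=c$ with $\mu_1(x^{*})<r$, and I set $x_1:=x^{*}$.

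For $x_2$, condition (F.2) supplies $z$ with $J(z)<c=\inf_{B}J$; since $z\notin B$ this forces $\mu_1(z)>r$, so take $x_2:=z$. Then (D.1) reads $\mu_1(x^{*})<r<\mu_1(z)$, which holds, and substituting $\Phi(x_1)=\mu_1(x^{*})$, $J(x_1)=c$, $\sigma=r$ into (D.2) and simplifying, the common factor $r-\mu_1(x^{*})>0$ cancels and the inequality collapses to exactly $c>J(z)$, which is the defining property of $z$. With (D.1)--(D.3) in place, Lemma~\ref{Key-lemma} delivers a nonempty open $A\subseteq I\subseteq(0,+\infty)$ for which $\mu_1+\lambda J$ has at least three critical points. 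I expect the third paragraph to be the main obstacle: all three geometric hypotheses must cooperate there, with (F.3) pushing the $\mu_2$-constrained minimizer off the sphere, (F.4) transferring interiority from $\mu_2$ to $\mu_1$, and convexity (F.5) converting a merely interior minimizer into one whose value equals $\inf_{B}J$ (not just $\inf_{\mu_2\le r}J$) over the possibly non-convex set $B$; the attainment details (weak compactness and weak lower semicontinuity) and the positivity of $A$ should be checked carefully.
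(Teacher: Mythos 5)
Your proof is correct, and its skeleton coincides with the paper's: both apply Lemma \ref{Key-lemma} with $\Phi=\mu_1$ and $\sigma=r$, both get coercivity of $\mu_1+\lambda J$ from \ref{F.1} together with the second half of \ref{F.4}, both minimize $J$ over the weakly compact convex set $C=\set{x:\mu_2(x)\le r}$, and both take $x_2$ from \ref{F.2} so that \ref{D.2} collapses to $J(x_2)<\inf_B J$ after cancelling the factor $r-\mu_1(x_1)>0$. Where you genuinely diverge is the central step: showing that the minimizer of $J$ over $C$ actually minimizes $J$ over all of $B=\set{x:\mu_1(x)\le r}$ and satisfies $\mu_1<r$. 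The paper runs a three-case analysis (according to whether $y$, or the $C$-minimizer $z$, or neither minimizes $J$ over $B$) and, in the last case, derives a contradiction by sliding a convex combination of $z$ and a hypothetical better point $s\in B\setminus C$ onto $\boundary{C}$ and comparing with $\inf_{\boundary{C}}J$. You instead note that \ref{F.3} pushes the $C$-minimizer $x^{*}$ off the level set $\set{x:\mu_2(x)=r}$ into the open set $\set{x:\mu_2(x)<r}\subseteq C$, so $J'(x^{*})=0$; the first-order inequality for the convex function $J$ along segments in $\operatorname{conv}B$ then yields $J(x^{*})\le J(b)$ for all $b\in B$, and \ref{F.4} gives $\mu_1(x^{*})\le\mu_2(x^{*})<r$. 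This is cleaner: it removes the case distinction entirely and avoids the boundary-infimum comparison the paper relies on in its third case, while consuming \ref{F.5} just as essentially. Two minor differences worth noting: you justify attainment of $\inf_C J$ via convexity and continuity of $J$ on the closed convex set $C$ (weak lower semicontinuity by Mazur), whereas the paper uses sequential weak continuity coming from compactness of $J'$ --- either works; and you shrink the interval to $I=(0,\lambda_0)$ while the paper keeps $I=(0,+\infty)$, which is immaterial since the lemma only needs some nonempty open $A\subseteq I$.
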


\begin{proof}
We will use Lemma \ref{Key-lemma}. Set $I=(0,+\infty )$ and observe that for any $\lambda \in I$ we have for sufficiently large $\norm{x}$
by \ref{F.1} and \ref{F.4} that $\frac{J(x)}{\mu _{2}(x)}>-\frac{1}{2\lambda}$. Thus 
\begin{equation*}
\mu _{1}(x)+\lambda J(x)>\mu _{2}(x)-\lambda \frac{1}{2\lambda }\mu _{2}(x)= 
\frac{1}{2}\mu _{2}(x)\rightarrow +\infty
\end{equation*}
as $\norm{x}\to +\infty $. So we have condition\ref{D.3} of Lemma \ref{Key-lemma} satisfied.

We define $C:=\set{ x\in X:\mu _{2}(x)\leq r} $. We claim there
exists $x_{1}$ such that $\mu_{1}(x_1)<r$ and $J(x_{1})=\inf_{x\in B}J(x)$.
Note that $C\subset B$. Since $\mu _{2}$ is continuous and convex, the set $ C $ is weakly closed. Since $\mu _{2}$ is coercive, it follows that $C$ is weakly compact. Since $J$ has a~compact derivative, so it is s.w.l.s.c. and therefore its restriction to $C$ attains its infimum. We shall refer to its minimizer as $z$.

Take $y$ as in \ref{F.3}. We
can distinguish the three following cases
\begin{enumerate}[label=\textbf{Case \arabic*.}, ref=\textbf{case \arabic*}]
\item $y$ minimizes also $J$ over $B$. \label{main.proof.case.1}
\item $y$ does not minimize $J$ over $B$ but $z$ does. \label{main.proof.case.2} 
\item neither $y$ and nor $z$ minimize $J$ over $B$. \label{main.proof.case.3}
\end{enumerate}

In \ref{main.proof.case.1} we put $y=x_{1}$ since $r>\mu _{2}(y)\geq \mu _{1}(y)$.
Which proves the case.

In \ref{main.proof.case.2} we take $z=x_{1}$ since 
\begin{equation*}
J(z)=\inf_{x\in C}J(x)=\inf_{x\in B}J(x)
\end{equation*}
Suppose $z\in \boundary{C}$, then 
\begin{equation*}
J(z)=\inf_{x\in \boundary{C}}J(x)>J(y)>J(z)
\end{equation*}
contradiction. Thus $r>\mu _{2}(z)\geq \mu _{1}(z)$.

In \ref{main.proof.case.3} if neither $y$ and nor $z$ minimize $J$ in $B$, there would exist such $s\in B\setminus C$ such that $J(s)<J(z)\leq J(y)$. $C$ is convex and closed thus there would exists such $\alpha \in (0,1)$ that $t:=\alpha s+(1-\alpha )z\in \boundary{C}$. Then by \ref{F.5} we see that 
\begin{equation*}
J(t)\geq \inf_{x\in \boundary{C}}J(x)=J(z)>J(s)
\end{equation*}
Since $J$ in convex 
\begin{equation*}
J(t)\leq \alpha J(s)+(1-\alpha )J(z)<J(z)
\end{equation*}
We see that it is impossible. Thus we have $x_{1}$ such that $\mu _{1}(x_{1})<r$ and $J(x_{1})=\inf_{x\in B}J(x)$.

By \ref{F.2} there exist $x_{2}$ such that $\mu _{1}(x_{2})>r$ and $J(x_{2})<\inf_{x\in B}J(x)=J(x_{1})$.\bigskip\ Putting $\phi =\mu _{1}$, $\delta =r$ we see that condition \ref{D.1} of Lemma \ref{Key-lemma} is satisfied.

Finally 
\begin{equation*}
\begin{array}{l}
\inf_{x\in B}J(x)=J(x_{1})=\frac{J(x_{1})(\mu _{1}(x_{2})-\mu _{1}(x_{1}))}{
(\mu _{1}(x_{2})-\mu _{1}(x_{1}))}= \\ \\
=\frac{(\mu _{1}(x_{2})-r)J(x_{1})+(r-\mu _{1}(x_{1}))J(x_{1})}{(\mu
_{1}(x_{2})-\mu _{1}(x_{1}))}> \\ \\
>\frac{(\mu _{1}(x_{2})-r)J(x_{1})+(r-\mu _{1}(x_{1}))J(x_{2})}{(\mu
_{1}(x_{2})-\mu _{1}(x_{1}))}.
\end{array}
\end{equation*}
Thus condition \ref{D.2} of Lemma \ref{Key-lemma} holds.
\end{proof}

Since we aim at applications for finite dimensional systems and we will work
in a~finite dimensional Hilbert space, the assumptions of Theorem \ref{3CP.Kowalski.Galewski}
can be relaxed. Other types of discrete BVPs can also be considered with
this approach. In the finite dimensional context we obtain the following:

\begin{theorem} \label{3CP.HilbertSpace}Let $(X,\norm{\cdot})$ be a~finite dimensional
Hilbert space and let $J\in \Ck[1]{X,\setR}$ be a~functional with compact derivative. Assume that $\mu_{1}\in \Ck[1]{X,\setR}$ is coercive, and $\mu_{2}\in \Ck[1]{X,\setR_+}$ be a~convex coercive functional. Assume that there derivative of $\mu_{1}$ is an operator $\mu_{1}^{\prime }:X\rightarrow \dualspace{X}$ admitting a~continuous inverse, let $y\in X$ and $r>0$ be fixed. Assume the following conditions are satisfied:
\begin{enumerate}[label=\textbf{(G.\arabic*)}]
\item $\liminf_{\norm{x}\rightarrow \infty }\frac{J(x)}{\mu_{2}\left( x\right) }\geq 0$ \label{G.1}
\item $\inf_{x\in X}J(x)<\inf_{\mu _{1}\left( x\right) \leq r}J(x)$ \label{G.2}
\item $\mu_{2}\left( y\right) <r$ and $J(y)<\inf_{\mu_{2}\left( x\right) =r}J(x)$ \label{G.3}
\item $\foralll_{x\in X}\mu_{2}\left( x\right) \leq r \then \mu _{1}\left( x\right) \leq \mu _{2}\left( x\right) $ and $\mu_{1}\left( x\right) \geq \mu _{2}\left( x\right) $ for $\norm{x} \geq M$, where $M>0$ is some constant. \label{G.4}
\item $J$ is convex on the convex hull of $B:=\set{x\in X:\mu _{1}(x)\leq r}$ \label{G.5}
\end{enumerate}
Then there exists a~non empty open set $A\subset (0,+\infty )$ such that for all $\lambda \in A$ the functional $x \to \mu _{1}\left( x\right) +\lambda J(x)$ has at least three critical points.
\end{theorem}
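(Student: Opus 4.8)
The plan is to deduce this statement directly from the Main Theorem (Theorem \ref{3CP.Kowalski.Galewski}) by checking that a finite-dimensional Hilbert space automatically satisfies the two structural hypotheses that were imposed abstractly there but are not listed among \ref{G.1}--\ref{G.5}. First I would recall that every Hilbert space is uniformly convex, which is immediate from the parallelogram identity, and that by the Riesz representation theorem its dual is isometrically isomorphic to the space itself; hence the dual is again a Hilbert space and in particular strictly convex. Thus the ambient-space requirement of Theorem \ref{3CP.Kowalski.Galewski}, namely that $X$ be a uniformly convex Banach space with strictly convex dual, holds for free in the present setting.

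The only genuine difference between the two hypothesis lists is that Theorem \ref{3CP.Kowalski.Galewski} requires $\mu_1$ to be sequentially weakly lower semicontinuous, whereas here $\mu_1$ is assumed merely coercive and of class $\Ck[1]{X,\setR}$. The key fact I would invoke is that in a finite-dimensional space the weak topology coincides with the norm topology, so weak convergence of a sequence is the same as norm convergence. Consequently every continuous functional is sequentially weakly continuous, and in particular sequentially weakly lower semicontinuous. Since $\mu_1$ is continuous, being $\Ck[1]{X,\setR}$, it is automatically sequentially w.l.s.c., which supplies exactly the property dropped from the hypothesis list.

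With these two observations in hand, I would note that the conditions \ref{G.1}--\ref{G.5} are literally identical to \ref{F.1}--\ref{F.5}, and that the remaining assumptions on $J$, on $\mu_1^{\prime}$, and on $\mu_2$ transfer verbatim. Therefore every hypothesis of Theorem \ref{3CP.Kowalski.Galewski} is verified, and a single application of that theorem produces the nonempty open set $A\subset(0,+\infty)$ such that $x\to \mu_1(x)+\lambda J(x)$ has at least three critical points for each $\lambda\in A$. I expect no serious obstacle here: the entire content is the reduction just described, and the only point requiring a moment of care is to confirm that the proof of Theorem \ref{3CP.Kowalski.Galewski} uses the infinite-dimensional framework solely through the convexity and duality properties that we have already checked hold in a finite-dimensional Hilbert space.
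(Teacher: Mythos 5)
Your proposal is correct and follows essentially the same route as the paper: the paper likewise observes that a finite-dimensional Hilbert space is uniformly convex with strictly convex dual and that weak and strong convergence coincide in finite dimensions (so the continuous functional $\mu_1$ is automatically sequentially weakly lower semicontinuous), and then applies Theorem \ref{3CP.Kowalski.Galewski}. Your write-up merely spells out these standard facts in more detail than the paper does.
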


\begin{proof}
We see that $X$ is a~Banach space with a~strictly convex dual. Since $X$ is
finite dimensional weak convergence is equivalent to the strong one, so $\mu 
$ and $J$ are weakly continuous. Thus we may apply Theorem \ref{3CP.Kowalski.Galewski}.
\end{proof}

\section{Existence and multiplicity results for problem \eqref{zad}}

\begin{lemma}\label{norm.equivalence}
For any $u \in X$ following inequality holds
\begin{equation*}
\frac{2}{\sqrt{T+1}}\norm[C]{u} \leq \norm{u} \leq 2 \sqrt{T} \norm[C]{u}
\end{equation*}
\end{lemma}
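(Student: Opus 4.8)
The plan is to read $\norm[C]{\cdot}$ as the maximum norm $\norm[C]{u}=\max_{k\in[0,T+1]}\modul{u(k)}$ and, since $u(0)=u(T+1)=0$, to note that $\norm[C]{u}=\max_{k\in[1,T]}\modul{u(k)}$; then I would prove the two inequalities independently.

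For the right-hand inequality I would start from $\norm{u}^2=\summ[T+1]{k=1}\modul{u(k)-u(k-1)}^2$ and apply the elementary bound $(a-b)^2\le 2a^2+2b^2$ termwise, obtaining
\[
\norm{u}^2\le 2\summ[T+1]{k=1}\modul{u(k)}^2+2\summ[T+1]{k=1}\modul{u(k-1)}^2 .
\]
Using $u(T+1)=0$ in the first sum and $u(0)=0$ in the second, both reindex to $\summ[T]{k=1}\modul{u(k)}^2$, so $\norm{u}^2\le 4\summ[T]{k=1}\modul{u(k)}^2\le 4T\,\norm[C]{u}^2$, which is exactly $\norm{u}\le 2\sqrt{T}\,\norm[C]{u}$.

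For the left-hand inequality I would fix $k^{\ast}\in[1,T]$ with $\modul{u(k^{\ast})}=\norm[C]{u}$ and use the two telescoping representations $u(k^{\ast})=\summ[k^{\ast}]{j=1}\deltaop[u]{j-1}$ (from $u(0)=0$) and $u(k^{\ast})=-\summ[T+1]{j=k^{\ast}+1}\deltaop[u]{j-1}$ (from $u(T+1)=0$). Writing $a=\summ[k^{\ast}]{j=1}\modul{\deltaop[u]{j-1}}^2$ and $b=\summ[T+1]{j=k^{\ast}+1}\modul{\deltaop[u]{j-1}}^2$, so that $a+b=\norm{u}^2$, Cauchy--Schwarz applied to each representation gives $\modul{u(k^{\ast})}^2\le k^{\ast}a$ and $\modul{u(k^{\ast})}^2\le(T+1-k^{\ast})b$. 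Hence
\[
\norm{u}^2=a+b\ge \modul{u(k^{\ast})}^2\left(\frac{1}{k^{\ast}}+\frac{1}{T+1-k^{\ast}}\right)=\modul{u(k^{\ast})}^2\,\frac{T+1}{k^{\ast}(T+1-k^{\ast})} .
\]
Since $k^{\ast}(T+1-k^{\ast})\le\frac{(T+1)^2}{4}$, the last factor is at least $\frac{4}{T+1}$, so $\norm{u}^2\ge\frac{4}{T+1}\norm[C]{u}^2$, i.e. $\frac{2}{\sqrt{T+1}}\norm[C]{u}\le\norm{u}$.

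The main obstacle is obtaining the sharp constant $\frac{2}{\sqrt{T+1}}$ rather than the weaker $\sqrt{2/(T+1)}$ that results from Cauchy--Schwarz along a single telescoping direction. The factor $2$ is recovered precisely by summing the two one-sided estimates and then invoking the maximization $k^{\ast}(T+1-k^{\ast})\le(T+1)^2/4$; this quantitative step is the crux, and one checks it is sharp, with equality already at $T=1$, where $\norm{u}=\sqrt{2}\,\norm[C]{u}$.
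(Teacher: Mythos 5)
Your proof is correct, and for the upper bound it is essentially the paper's argument: bound each term $\modul{\deltaop[u]{k-1}}^2$ via $(a-b)^2\le 2a^2+2b^2$ and use $u(0)=u(T+1)=0$ to land on $4\summ[T]{k=1}\modul{u(k)}^2\le 4T\norm[C]{u}^2$. For the lower bound the two arguments diverge in mechanics. The paper first combines the two telescoping sums at the level of absolute values, $2\norm[C]{u}\le\summ[T+1]{k=1}\modul{\deltaop[u]{k-1}}$, and then applies a single Cauchy--Schwarz to the whole sum to get $\le\sqrt{T+1}\,\norm{u}$ (the displayed constant $(T+1)$ in the paper is a typo for $\sqrt{T+1}$, as the stated conclusion shows). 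You instead apply Cauchy--Schwarz separately to the two halves $[1,k^{\ast}]$ and $[k^{\ast}+1,T+1]$, add the resulting lower bounds on $a$ and $b$, and finish with $k^{\ast}(T+1-k^{\ast})\le(T+1)^2/4$. Both yield the same constant $2/\sqrt{T+1}$; the paper's route is shorter, while yours makes visible exactly where the factor $2$ comes from (the harmonic combination of the two one-sided estimates) and confirms sharpness at $T=1$. Either version is acceptable.
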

\begin{proof}
First it is obvious that
\begin{equation*}
2 \norm[C]{u} \leq \summ[T]{k=1} \modul{\deltaop[u]{k}}
\end{equation*}
By H\"olders inequality we know that
\begin{equation*}
\summ[T]{k=1} \modul{\deltaop[u]{k}} \leq \f{T+1} \sqrt{\summ[T+1]{k=1} \f{\deltaop[u]{k}}^2} = \f{T+1} \norm{u}
\end{equation*}
Which proves the first inequality. The other one is proven as follows:
\begin{equation*}
\begin{array}{c}
\norm{u} = \sqrt{\summ[T+1]{k=1} \f{\deltaop[u]{k}}^2} = 
\sqrt{\summ[T+1]{k=1} \f{\f[u]{k}-\f[u]{k-1}}^2} \leq \\\\
\sqrt{\summ[T]{k=1} 2 \cdot \norm[C]{u}^2 + 2 \cdot \norm[C]{u} \cdot \norm[C]{u} } = 2\sqrt{T} \norm[C]{u}. 
\end{array}
\end{equation*}
\end{proof}

\begin{proposition} \label{Theorem.Coerciveness}Let $p^{-}\geq 2$. Assume that conditions \ref{A.1}-\ref{A.5} hold. Then for all $\lambda >0$, $\gamma >0$ problem \eqref{zad} has at least one solution.
\end{proposition}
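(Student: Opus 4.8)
The plan is to obtain a solution as a global minimizer of $E_{\gamma,\lambda}$ by the direct method. Since $X$ is a finite dimensional Hilbert space and $E_{\gamma,\lambda}$ is $C^1$ (hence continuous), it suffices to prove that $E_{\gamma,\lambda}$ is coercive, i.e. $E_{\gamma,\lambda}(x)\to+\infty$ as $\norm{x}\to+\infty$; a continuous coercive functional on a finite dimensional space attains its infimum, and the minimizer is an interior critical point, thus a weak and, by the summation by parts noted in Section 2, a strong solution of \eqref{zad}. So everything reduces to the coercivity estimate, which I would establish by treating the three summands of $E_{\gamma,\lambda}$ separately.

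For the leading term I would use $\frac{1}{p(k-1)}|\Delta x(k-1)|^{p(k-1)}\geq \frac{1}{p^{+}}|\Delta x(k-1)|^{p(k-1)}$ and note that if $R:=\norm{x}\geq\sqrt{T+1}$ then, since $\norm{x}^{2}=\sum_{k=1}^{T+1}|\Delta x(k-1)|^{2}$, some index $k^{\ast}$ satisfies $|\Delta x(k^{\ast}-1)|\geq R/\sqrt{T+1}\geq 1$; because $p(k^{\ast}-1)\geq p^{-}$ this forces the anisotropic term to be at least $\frac{1}{p^{+}}(R/\sqrt{T+1})^{p^{-}}=c_{1}\norm{x}^{p^{-}}$ with $c_{1}>0$. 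This is the term that produces growth.

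For the $F$-summand I would invoke \ref{A.1}: for every $\varepsilon>0$ there is $R_{\varepsilon}$ with $F(k,t)\geq-\varepsilon|t|^{p^{-}}$ once $|t|\geq R_{\varepsilon}$, while continuity bounds $F$ below on $\{|t|\leq R_{\varepsilon}\}$, giving $F(k,t)\geq-\varepsilon|t|^{p^{-}}-C_{\varepsilon}$ for all $t$. Combined with $\sum_{k=1}^{T}|x(k)|^{p^{-}}\leq T\norm[C]{x}^{p^{-}}$ and $\norm[C]{x}\leq\tfrac{\sqrt{T+1}}{2}\norm{x}$ from Lemma \ref{norm.equivalence}, this yields $\lambda\sum_{k=1}^{T}F(k,x(k))\geq-\lambda\varepsilon c_{2}\norm{x}^{p^{-}}-\lambda T C_{\varepsilon}$ with $c_{2}=T(\tfrac{\sqrt{T+1}}{2})^{p^{-}}$. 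For the $G$-summand the liminf condition in \ref{A.4} makes $G(k,\cdot)$ tend to $+\infty$, hence it is continuous and bounded below, so $\gamma\sum_{k=1}^{T}G(k,x(k))\geq-\gamma T C_{G}$. Adding the three bounds gives, for $\norm{x}$ large, $E_{\gamma,\lambda}(x)\geq(c_{1}-\lambda\varepsilon c_{2})\norm{x}^{p^{-}}-(\lambda T C_{\varepsilon}+\gamma T C_{G})$.

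The decisive move is to fix $\varepsilon$ small enough, depending on the given $\lambda$, that $c_{1}-\lambda\varepsilon c_{2}>0$; then the displayed lower bound tends to $+\infty$ as $\norm{x}\to+\infty$, coercivity follows, and the direct method finishes the proof. I expect the main obstacle to be aligning the growth rates: the negative part of $F$ grows at exactly the critical order $|t|^{p^{-}}$, the same order as the anisotropic lower bound, so coercivity is not automatic and genuinely depends on absorbing the $F$-term into the leading term through a sufficiently small $\varepsilon$. Securing the anisotropic lower bound in the honest power $\norm{x}^{p^{-}}$ of the Hilbert norm (via Lemma \ref{norm.equivalence}, with $p^{-}\geq 2$ in force) is the other place where care is needed; once the two $p^{-}$-powers are matched the conclusion is immediate.
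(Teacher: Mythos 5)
Your proposal is correct and follows essentially the same route as the paper: the direct method, reducing everything to coercivity of $E_{\gamma,\lambda}$ on the finite-dimensional space $X$ and taking the global minimizer as the critical point. The paper merely asserts the coercivity (citing \cite{galewski.wieteska} for the leading term and invoking \ref{A.1} for the rest), whereas you supply the absorption argument with the small $\varepsilon$ explicitly; that is a welcome elaboration, not a different method.
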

\begin{proof}
Let us define
\begin{equation*}
\mu_{1}(x)=\summ[T+1]{k=1}\f{ \frac{1}{p(k-1)} \modul{\deltaop[x]{k-1}}^{p(k-1)}+\gamma G(k,x(k)) },
\end{equation*}
\begin{equation*}
\mu _{2}(x)=\summ[T+1]{k=1}\f{ \frac{1}{p(k-1)} \modul{\deltaop[x]{k-1}}^{p(k-1)}}.
\end{equation*}
Let 
\begin{equation*}
J(x)=\summ[T]{k=1} F(k,x(k)).
\end{equation*}
Then $E_{\gamma ,\lambda }(x)=\mu _{1}(x)+\lambda J(x)$. 
Since in \cite{galewski.wieteska} it was shown that $\mu_{2}(x)\rightarrow \infty $ as $\norm{x} \to \infty $, so $\mu _{1}(x)\rightarrow \infty $ as $\norm{x} \to \infty $. Next we see that by \ref{A.1} it follows that $E_{\gamma ,\lambda }(x)\to \infty $ as $\norm{x} \to \infty $.
\bigskip

Since $E_{\gamma ,\lambda }$ is differentiable, continuous, coercive and $X$
is a~finite dimensional space, it has at least one critical point which is a
weak and thus a~strong solution to (\ref{zad}).
\end{proof}

As an application of Theorem \ref{3CP.HilbertSpace} to problem \eqref{zad} we have the
following:

\begin{theorem}\label{Main.Theorem} 
Let $p^{-}\geq 2$. Assume that conditions \ref{A.1}-\ref{A.5} hold. Then there exists $\gamma_{max} >0 $ such that for every $\gamma \in (0,\gamma_{max})$ there exists $A_\gamma \subseteq (0,+\infty )$ such that for
all $\lambda \in A$ problem \eqref{zad} has at least two nontrivial solutions.
\end{theorem}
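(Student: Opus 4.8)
The plan is to apply Theorem \ref{3CP.HilbertSpace} to the functional $E_{\gamma,\lambda}=\mu_1+\lambda J$, where $\mu_1$, $\mu_2$ and $J$ are exactly the functionals introduced in the proof of Proposition \ref{Theorem.Coerciveness}, and then to read off the solutions of \eqref{zad} as critical points of $E_{\gamma,\lambda}$. Since $X$ is a finite-dimensional Hilbert space, $\mu_1,J\in\Ck[1]{X,\setR}$ automatically have compact derivatives, $\mu_2$ is convex and coercive, and the coercivity of $\mu_1$ is already recorded in Proposition \ref{Theorem.Coerciveness}. For $p^-\ge 2$ the anisotropic $p$-Laplacian operator $\mu_2'$ is a homeomorphism, and I would check that for $\gamma$ small its $\gamma$-perturbation $\mu_1'$ still admits a continuous inverse. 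The remaining work is to verify the geometric hypotheses \ref{G.1}--\ref{G.5}.

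The key step, which also produces the threshold $\gamma_{max}$, is a trapping estimate confining the sublevel set $B:=\set{x\in X:\mu_1(x)\le r}$ to the box on which $F$ is well behaved. By the coercivity half of \ref{A.4} each $G(k,\cdot)$ tends to $+\infty$ and is continuous, hence bounded below, say $G(k,t)\ge -C_0$, so $\sum_{k=1}^{T}G(k,x(k))\ge -TC_0$ for every $x$. First I fix $r_0>0$ so small that $\mu_2(x)\le r_0$ forces $\norm[C]{x}\le\min(m,M_1)$; this is possible by coercivity of $\mu_2$ together with the norm equivalence of Lemma \ref{norm.equivalence}. Then I set $r:=r_0/2$ and $\gamma_{max}:=\frac{r_0}{2TC_0}$, so that for $\gamma\in(0,\gamma_{max})$ and $x\in B$ one has $\mu_2(x)=\mu_1(x)-\gamma\sum_{k=1}^{T}G(k,x(k))\le r+\gamma TC_0\le r_0$, giving $\modul{x(k)}\le\min(m,M_1)$ for every $k$. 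As this box is convex, the same bound holds on the convex hull of $B$.

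With $B$ trapped, the hypotheses follow. Condition \ref{G.1} is the asymptotic comparison of $J$ with $\mu_2$ coming from \ref{A.1}, argued as in \cite{galewski.wieteska} and independent of $\gamma$. For \ref{G.4}, the first implication holds because $\mu_2(x)\le r\le r_0$ yields $\modul{x(k)}\le M_1$, hence $G(k,x(k))\le 0$ and $\mu_1(x)\le\mu_2(x)$; the second holds for $\norm{x}\ge M$ large since then some coordinate is large (Lemma \ref{norm.equivalence}) and the linear growth in \ref{A.4} beats the finite lower bound $-C_0$, forcing $\sum_{k}G(k,x(k))\ge 0$. On $B$ all coordinates lie in $[-m,m]$, so $F(k,x(k))\ge 0$ by \ref{A.2} and $\inf_{B}J=J(0)=0$, whereas evaluating $J$ at the function equal to $s_1$ on $[1,T]$ gives a negative value, so $\inf_{X}J<0$; this is \ref{G.2}. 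Taking $y=0$ gives \ref{G.3}, because $\mu_2(0)=0<r$ and on the compact sphere $\set{\mu_2(x)=r}$ every point is nonzero with coordinates in $[-m,m]$, so $J>0$ there and $J(0)=0<\inf_{\mu_2(x)=r}J(x)$. Finally \ref{G.5} holds since each $F(k,\cdot)$ is convex on $[-m,m]$ by \ref{A.5}, making $J$ convex on the convex hull of $B$.

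Theorem \ref{3CP.HilbertSpace} then furnishes, for each $\gamma\in(0,\gamma_{max})$, a nonempty open $A_\gamma\subseteq(0,+\infty)$ such that $E_{\gamma,\lambda}$ has at least three critical points for every $\lambda\in A_\gamma$; these are three distinct solutions of \eqref{zad}, and since at most one of them can be the trivial solution $x=0$, at least two are nontrivial. The hard part is the trapping estimate of the second paragraph: pinning down $r_0$, $r$ and especially $\gamma_{max}$ so that $B$ and its convex hull are confined to $\modul{x(k)}\le\min(m,M_1)$, a confinement on which each of \ref{G.2}, \ref{G.3} and \ref{G.5} rests. A secondary technical point I would need to settle is that the continuous invertibility of $\mu_2'$ survives the passage to $\mu_1'$, which may further shrink $\gamma_{max}$.
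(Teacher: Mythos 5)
Your proposal follows essentially the same route as the paper: the same splitting $E_{\gamma,\lambda}=\mu_1+\lambda J$, the same trapping of the sublevel set $\set{\mu_1\le r}$ into the box $\modul{x(k)}\le\min(m,M_1)$ via a $\gamma_{max}$ defined from a uniform lower bound on $G$, the same choices $y=0$ and the constant test function in $[s_1,s_2]$ for \ref{G.2}--\ref{G.3}, and the same convexity argument for \ref{G.5}, concluding with Theorem \ref{3CP.HilbertSpace}. The one point you flag as unresolved, the continuous invertibility of $\mu_1'$, is in fact not verified in the paper's proof either, so your treatment is at least as careful as the original.
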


\begin{proof}
We will show step by step that the assumptions of Lemma \ref{3CP.HilbertSpace} hold.
We will start by proving \ref{G.1}.

Let $\seq{x}{n}$ such that $\norm{x_n}\to \infty$. Let $\epsilon >0$. 
By norm equivalence there exists such $c>0$ that:
\begin{equation*}
\norm{x}^{p^{-}} \geq c \summ[T]{k=1} \modul{x(k)}^{p^{-}}.
\end{equation*}
We set $c_1 = \frac{c T^{\frac{2-p^{-}}{2}}}{2p^+}$. By \ref{A.1} there exists such $K_1 \in \setN$ that
\begin{equation*}
\foralll_{k \in [1,T]} \foralll_{\modul{t}>K_1} \frac{-F(k,t)}{\modul{t}^{p^{-}}} < \epsilon \frac{c_1}{T}
\end{equation*}
Let $K_2 \in \setN$ be such that for all $n>K_2$, $\norm{x_n}>1$. It is easy to see that
\begin{equation*}
\mu(x_n) \geq \frac{T^\frac{2-p^{-}}{2}}{p^{+}} \norm{x_n}^{p^{-}} - \frac{T+1}{p^{+}}
\end{equation*}
Then there exists $K_3 \geq \max \set{K_2,K_1}$ such that for all $n \geq K_3$
\begin{equation*}
\mu_2(x_n) \geq \frac{T^\frac{2-p^{-}}{2}}{2p^{+}} \norm{x_n}^{p^{-}} \geq c_1 \summ[T]{k=1} \modul{x_n(k)}^{p^{-}}
\end{equation*}
Let denote as $M= \max \set{\modul{F(k,t)} : k\in [1,T], \modul{t}\leq K_1}$. By coerciveness of $\mu_2$ there exists such $K_4$ that for all $n\geq K_4$
\begin{equation*}
\mu_2(x_n) \geq \frac{MT}{\epsilon}
\end{equation*}
Let $k = \max \set{K_3,K_4}$. Let $n \geq k$, then:
\begin{equation*}
\begin{array}{c}
-\frac{J(x_n)}{\mu_2(x_n)} = \frac{- \summ[T]{k=1} F(k,x_n(k))}{\mu_2(x_n)}  \leq \\
\leq \frac{ \summ[T]{k=1, \modul{x(k)}\leq K_1} \modul{F(k,x_n(k))}}{\mu_2(x_n)} + 
\frac{ \summ[T]{k=1, \modul{x(k)}> K_1} \max \{- F(k,x_n(k)) , 0\} }{\mu_2(x_n)} \leq \\
\leq \frac{ \summ[T]{k=1, \modul{x(k)}\leq K_1} \modul{F(k,x_n(k))}}{\mu_2(x_n)} + 
\frac{ \summ[T]{k=1, \modul{x(k)}> K_1} \max \{- F(k,x_n(k)) , 0\}}{c_1 \summ[T]{k=1} \modul{x_n(k)}^{p^{-}}} \leq \\
\leq \summ[T]{k=1, \modul{x(k)}\leq K_1} \frac{  M \epsilon }{M T} + 
\summ[T]{k=1, \modul{x(k)}> K_1} \frac{\epsilon}{T} = \epsilon
\end{array}
\end{equation*}
Thus 
\begin{equation*}
\foralll_{\epsilon >0} \existss_{k \in \setN} \foralll_{n \geq k} \quad \frac{J(x_n)}{\mu_2(x_n)	} \geq -\epsilon.
\end{equation*}
Which proves \ref{G.1}:
\begin{equation*}
\liminf_{\norm{x}\to + \infty} \frac{J(x)}{\mu_2(x)} \geq 0
\end{equation*}
Now we will prove \ref{G.4}. By coerciveness and $\f[\mu_2]{0}=0$ there exists $r^\ast >0$ such that
\begin{equation*}
\forall x \in X, \quad \mu_2(x) \leq r^\ast \then \norm{x} \leq 1.
\end{equation*}
Let $0<r < r_1 = \min \set{\f{\frac{2 M_1}{\sqrt{T+1}}}^{p^+} \frac{T^{\frac{p^{+}-2}{2}}}{p^+},r^\ast}$, and let $x\in X$ be such that $\mu_2(x) \leq r$. Then
\begin{equation*}
\mu_2(x) \leq \f{\frac{2 M_1}{\sqrt{T+1}}}^{p^+} \frac{T^{\frac{p^{+}-2}{2}}}{p^+}.
\end{equation*}
We know that
\begin{equation*}
\mu_2(x) = \summ[T+1]{k=1} \frac{1}{p(k-1)} \modul{\deltaop[x]{k-1}}^{p(k-1)} \geq \frac{1}{p^{+}} \summ[T+1]{k=1} \modul{\deltaop[x]{k-1}}^{p(k-1)}.
\end{equation*}
When $\norm{x}\leq 1$ it follows that
\begin{equation*}
\summ[T+1]{k=1} \modul{\deltaop[x]{k-1}}^{p(k-1)} \geq T^{\frac{p^{+}-2}{2}} \norm{x}^{p^{+}}.
\end{equation*}
So we have the following:
\begin{equation*}
\norm[C]{x}\leq  \frac{\sqrt{T+1}}{2} \norm{x}.
\end{equation*}
Thus 
\begin{equation*}
\norm{x}\leq \frac{2 M_1}{\sqrt{T+1}} \text{ and } \norm[C]{x}\leq M_1.
\end{equation*}
By \ref{A.4} for all $k \in [1,T]$ we see that $G(k,x(k)) \leq 0$ and so $\summ[T]{k=1}G(k,x(k)) \leq 0$. For any $\gamma >0$ this implies that $\mu_1(x) \leq \mu_2(x)$. We will prove the second part of condition \ref{G.4}. From \ref{A.4} we have that
\begin{equation*}
\existss_{d>0} \foralll_{k \in [1,T]} \quad \liminf_{\modul{t}\to \infty} \frac{G(k,t)}{\modul{t}} > d.
\end{equation*}
Let $t_k >0 $ be such real number that
\begin{equation*}
\foralll_{k \in [1,T]} \foralll_{\modul{t}>t_k} \quad \frac{G(k,t)}{\modul{t}} > \frac{d}{2}.
\end{equation*}
By $G$ continuity it is obvious that there exists such $l<0$ that
\begin{equation*}
\foralll_{k \in [1,T]} \foralll_{t \in\setR} \quad G(k,t) \geq l.
\end{equation*}
Let $x \in X$ such that 
\begin{equation*}
\norm{x} \geq M := \min \set{2 \sqrt{T} t_k, \frac{- 4 l T\sqrt{T}}{d}} >0
\end{equation*}
By 
\begin{equation*}
\norm{x}\leq 2 \sqrt{T} \norm[C]{x}
\end{equation*}
we conclude that $\norm[C]{x} \geq \max \set{t_k, \frac{-2lT}{d}}$. Let $q \in [1,T]$ be such index that $\norm[C]{x} = \modul{x(q)}$. Then
\begin{equation*}
\summ[T+1]{k=1} G(k,x(k)) = \summ[T+1]{q\neq k=1} G(k,x(k)) + G(q,x(q)) \geq l\cdot T + G(q,x(q)).
\end{equation*}
Since $\modul{x(q)}=\norm[C]{x}>t_k$ then $G(q,x(q)) > \frac{d}{2} \modul{x(q)}$. Moreover, since $\modul{x(q)}=\norm[C]{x}> \frac{-2lT}{d}$ we obtain that
\begin{equation*}
l \cdot T + G(q,x(q)) \geq l \cdot T + \frac{d}{2} \modul{x(q)} \geq l \cdot T + \frac{d}{2} \cdot \frac{-2lT}{d} = 0.
\end{equation*}
then for every $\gamma >0$ we conclude that $\mu_1(x) \geq \mu_2(x)$ which proves the case \ref{G.4}. We will now prove \ref{G.2}. Let 
\begin{equation*}
r < r_2 = \min \set{\f{\frac{2 M_1}{\sqrt{T+1}}}^{p^+} \frac{T^{\frac{p^{+}-2}{2}}}{p^+},\f{\frac{2 m}{\sqrt{T+1}}}^{p^+} \frac{T^{\frac{p^{+}-2}{2}}}{p^+},r^\ast}
\end{equation*}
For such $r<r_2\leq r_1$ the proof of \ref{G.4} holds. Lets define $\gamma_{max} = \frac{r_2 - r}{-(T+1) l}$. Let $x \in X$ such that $\mu_1(x) \leq r$, and $\gamma \in (0,\gamma_{max})$. We observe that
\begin{equation*}
\mu_1(x) = \mu_2(x) + \gamma \summ[T+1]{k=1} G(k,x(k)) \leq r.
\end{equation*}
We have the following chain of estimations:
\begin{equation*}
\begin{array}{c}
\mu_2(x) \leq r - \gamma \summ[T+1]{k=1} G(k,x(k))\leq r - \gamma l (T+1) \leq r - \gamma_{max} l (T+1)\\\\
\leq r - \frac{r_2-r}{-(T+1) l } l (T+1) \leq r_2
\end{array}
\end{equation*}
Since $r_2 \leq \f{\frac{2 m}{\sqrt{T+1}}}^{p^+} \frac{T^{\frac{p^{+}-2}{2}}}{p^+}$ we obtain, in similar way as before
\begin{equation*}
\mu_2(x) \leq \f{\frac{2 m}{\sqrt{T+1}}}^{p^+} \frac{T^{\frac{p^{+}-2}{2}}}{p^+}.
\end{equation*}
Since
\begin{equation*}
\summ[T+1]{k=1} \modul{\deltaop[x]{k-1}}^{p(k-1)} \leq \f{\frac{2 m}{\sqrt{T+1}}}^{p^+} T^{\frac{p^{+}-2}{2}}
\end{equation*}
we see that:
\begin{equation*} 
\norm{x} \leq \frac{2m}{\sqrt{T+1}} \text{ and }\norm[C]{x}\leq m
\end{equation*}
Which proves that $J(x) \geq 0$. Since $x$ was taken arbitrary we have that $\inf_{\mu_1(x)\leq r} J(x) \geq 0$. On the other hand, if we choose 
\begin{equation*}
x(k) = \left \lbrace
\begin{array}{c l}
\frac{s_1+s_2}{2} &, k \in [1,T] \\
0 &, k=0 \lor k=T+1
\end{array}
 \right.
\end{equation*}
then $J(x) = \summ[T]{k=2} F(k,\frac{s_1+s_2}{2}) < 0$. Which proves \ref{G.2}
\begin{equation*}
\inf_{x \in X} J(x) < 0 \leq \inf_{\mu_1(x) \leq r} J(x).
\end{equation*}
By \ref{A.5} $J$ is convex on $\set{x: \norm[C]{x}\leq m}$. Thus it is convex on convex hull of $\set{x: \mu_1(x) \leq r}$ since it is the smallest convex set that contains $\set{x: \mu_1(x) \leq r}$. Then \ref{G.5} holds. Finally we prove \ref{G.3}. For the same $r < r_2$, let $x \in X$ such that $\mu_2(x) \leq r$. Then $\mu_2(x) \leq r_2$. Since $r_2 \leq \f{\frac{2 m}{\sqrt{T+1}}}^{p^+} \frac{T^{\frac{p^{+}-2}{2}}}{p^+}$ we obtain, in similar way as before
\begin{equation*}
\mu_2(x) \leq \f{\frac{2 m}{\sqrt{T+1}}}^{p^+} \frac{T^{\frac{p^{+}-2}{2}}}{p^+}.
\end{equation*}
Since
\begin{equation*}
\summ[T+1]{k=1} \modul{\deltaop[x]{k-1}}^{p(k-1)} \leq \f{\frac{2 m}{\sqrt{T+1}}}^{p^+} T^{\frac{p^{+}-2}{2}}
\end{equation*}
we see that:
\begin{equation*} 
\norm{x} \leq \frac{2m}{\sqrt{T+1}} \text{ and }\norm[C]{x}\leq m
\end{equation*}
Thus $J(x) \geq 0$. Let $y=0$. Then off course  $J(y)=0$. Let $z \in X$ such that $\mu_2(z)=r$. We know that $J(z) \geq 0$ and we will prove in fact $J(z) >0$. Indeed, it is obvious that
\begin{equation*}
\mu_2(x)=0 \iff x=0
\end{equation*} 
Since $0<r=\mu_2(z) \then z \neq 0$. Since $z \neq 0$ then there exists such $q \in [1,T]$ that $0<\norm[C]{z}=\modul{z(q)}$. Then 
\begin{equation} 
J(z) \geq \f[F]{q,z(q)} > 0.
\end{equation}
Finally
\begin{equation}
\inf_{\mu_2(x) = r} J(x) = \min_{\mu_2(x) = r} J(x) >0 = J(0)=J(y)
\end{equation}
which completes the proves by proving \ref{G.2}.
\end{proof}

\begin{tabular}{l}
Marek Galewski, Piotr Kowalski \\ 
Institute of Mathematics, \\ 
Technical University of Lodz, \\ 
Wolczanska 215, 90-924 Lodz, Poland, \\ 
marek.galewski@p.lodz.pl, piotr.kowalski.1@p.lodz.pl%
\end{tabular}

\end{document}